\newtheorem{teor}{Theorem}[section]
\newtheorem{thmx}{Theorem}
\newtheorem{lemma}[teor]{Lemma}
\newtheorem{prop}[teor]{Proposition}
\newtheorem{defi}[teor]{Definition}
\newtheorem{obs}[teor]{Remark}
\providecommand{\keywords}[1]
\title{Hyperbolicity of maximal entropy measures for certain maps isotopic to Anosov diffeomorphisms}
\author{Carlos F. \'{A}lvarez}
\address{Instituto de Matemática, Estatística e Computação Científica, IMECC-UNICAMP, Campinas-SP, Brazil}
\curraddr{Área de Ciencias Básicas Exactas, Grupo de Investigación Deartica, Universidad del Sinú Seccional Cartagena, Av. El Bosque, Trasnversal 54 Nº 30-72, Cartagena de Indias 130001, Colombia}
\email{carlosfalvarez@unisinu.edu.co, fabmath92@gmail.com}
\thanks{This work was partially supported by CAPES-Brazil grant \#88882.329056/2019-01.}
\subjclass{Primary 37D35; Secondary 37D30}\makeatletter
\keywords{Maximal entropy measures, partially hyperbolic diffeomorphisms, dominated splitting}
\begin{document}
\maketitle
\markright{HYPERBOLICITY OF MAXIMAL ENTROPY MEASURES}
\begin{abstract}
We prove the hyperbolicity of ergodic maximal entropy measures for a class of partially hyperbolic diffeomorphisms of $\mathbb{T}^{d}$, which have a compact two-dimensional center foliation. 
\end{abstract}
\section{Introduction} 
The \textit{metric entropy} describes the complexity level of a dynamical system with respect to an invariant probability measure, and the \textit{topological entropy} describes the complexity level of the whole system. If $f$ is a continuous transformation over a compact metric space, then the variational principle \cite[Theorem~8.6]{W} asserts that its topological entropy $h_{top}(f)$ coincides with the supremum of the entropies $h_{\mu}(f)$ with respect to all the invariant probability measures. An invariant probability measure such that $h_{\mu}(f)=h_{top}(f)$ is called a \textit{maximal entropy measure} for $f$. Such measures describe the complexity level of the whole system, which makes them an interesting topic of study. 
\vspace{0.1cm}

An important problem of smooth ergodic theory is to determine existence and uniqueness of maximal entropy measures for diffeomorphisms over compact manifolds. This problem has been solved by Bowen for uniformly hyperbolic diffeomorphisms \cite{B} and by Buzzi, Crovisier and Sarig for $C^{\infty}$ surface diffeomorphisms with positive topological entropy \cite{BCS}. 
\vspace{0.1cm}

In the setting of partially hyperbolic diffeomorphisms with one-dimensional center bundles, the existence of maximal entropy measures has been proved in \cite{DF} and there are some examples the uniqueness of maximal entropy measures where has been proved, for instance, for systems isotopic to an Anosov diffeomorphism \cite{BFSV, FPS, U}. For certain $C^{1+\alpha}$ partially hyperbolic diffeomorphisms over 3-manifolds having compact one-dimensional central leaves, a dichotomy was proved about the number of maximal entropy measures \cite{RRTU}.
\vspace{0.1cm}

The \textit{Lyapunov exponents} are real numbers, which measure the exponential growth of the derivative of dynamical systems (see \ref{lex} for a precise definition). An interesting question is to know under what conditions an ergodic maximal entropy measure is \textit{hyperbolic}, that is, it has no zero Lyapunov exponents and there exist Lyapunov exponents with different signs. 
\vspace{0.1cm}

In this work we study maximal entropy measures for partially hyperbolic systems with two-dimensional center bundles, and we propose a scenario in which every ergodic maximal entropy measure is hyperbolic.
\vspace{0.2cm}

Let $M$ is a compact Riemannian manifold. We start introducing the definition of partial hyperbolicity:
\begin{defi}
A diffeomorphism $f :M \rightarrow M$ is called \textbf{partially hyperbolic} (pointwise) if the tangent bundle admits
a continuous $Df$-invariant splitting $TM=E^{s}\oplus E^{c} \oplus E^{u}$ such that there exists $N>0$ and $\lambda>1$ satisfying the following conditions:
\begin{enumerate}
\item $\lambda\|Df_{x}^{N}v^{s}\|<\|Df_{x}^{N}v^{c}\|<\lambda^{-1}\|Df_{x}^{N}v^{u}\|$, 
\item $\|Df_{x}f^{N}v^{s}\|<\lambda^{-1}<\lambda<\|Df_{x}^{N}v^{u}\|,$ 
\end{enumerate}
for every $x\in M$ and unit vectors $v^{\sigma}\in E^{\sigma}(x) (\sigma=s,c,u).$
\end{defi}

\begin{obs}
For partially hyperbolic diffeomorphisms, it is a well-known fact that there are foliations $\mathcal{F}^{\sigma}$ tangent to the subbundles $E^{\sigma}$ for $\sigma= s,u$ \cite{HPS}. A partially hyperbolic diffeomorphism is called \textbf{dynamically coherent} if there exist invariant foliations $\mathcal{F}^{c \sigma}$ tangent to $E^{c \sigma}=E^{c}\oplus E^{\sigma}$ for $\sigma=s,u.$
\end{obs}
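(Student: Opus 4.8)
The plan is to establish the existence of the foliations $\mathcal{F}^{u}$ and $\mathcal{F}^{s}$ by the graph-transform (Hadamard--Perron) method, constructing $\mathcal{F}^{u}$ first and then applying the identical argument to $f^{-1}$ to obtain $\mathcal{F}^{s}$. Passing to the iterate $g=f^{N}$, conditions~(1) and~(2) of the definition show that $E^{u}$ is uniformly expanded while the complement $E^{cs}=E^{c}\oplus E^{s}$ is expanded at a strictly smaller rate, so that $E^{u}$ dominates $E^{cs}$. First I would fix, for each $x\in M$, the splitting $T_{x}M=E^{u}(x)\oplus E^{cs}(x)$ and read $g$ in exponential coordinates, $\exp_{g(x)}^{-1}\circ\, g\circ\exp_{x}$, as a uniformly bounded perturbation of the linear map $Dg_{x}$. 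Over a fixed small ball I consider the complete metric space of Lipschitz sections $\sigma_{x}\colon E^{u}(x)\cap B(0,r)\to E^{cs}(x)$ with $\sigma_{x}(0)=0$ and Lipschitz constant at most one.

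Next I would define the graph transform $\Gamma$, which sends the graph of $\sigma_{x}$ over $E^{u}(x)$ to the section over $E^{u}(g(x))$ whose graph coincides, near $g(x)$, with the image $g(\mathrm{graph}\,\sigma_{x})$; the domination expressed by~(1), together with the expansion in~(2), guarantees that this image is again an admissible graph and that $\Gamma$ contracts the $C^{0}$-distance between sections. An application of the Banach fixed point theorem to the family $(\sigma_{x})_{x\in M}$, viewed as a section of the bundle of section spaces, yields a unique $g$-invariant family of Lipschitz graphs. Tangency of each graph to $E^{u}(x)$ at the origin follows by comparing derivatives under the contraction, and a bootstrap of the same fixed-point argument in the $C^{1}$ topology (using that $g$ is a diffeomorphism and the uniform bunching coming from the rate estimates) promotes the leaves to $C^{1}$ embedded disks. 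The local unstable manifolds are then saturated into global leaves by $W^{u}(x)=\bigcup_{n\ge 0} g^{n}\bigl(W^{u}_{\mathrm{loc}}(g^{-n}x)\bigr)$, and $\mathcal{F}^{s}$ is obtained verbatim from $g^{-1}=f^{-N}$, for which $E^{s}$ is the dominating expanded direction.

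It remains to upgrade this pointwise family of disks to a genuine foliation. Uniqueness of the fixed point shows that $E^{u}$ is uniquely integrable, so that two local unstable manifolds are either disjoint or coincide; I would then verify that the sections $\sigma_{x}$ depend continuously on the base point $x$, a fact inherited from the continuity of the bundles $E^{u},E^{cs}$ and of $Dg$ together with the uniformity over $M$ of the contraction estimates. This continuous dependence lets the plaques fit together into foliation charts transverse to $E^{cs}$, producing $\mathcal{F}^{u}$, and likewise $\mathcal{F}^{s}$.

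The main obstacle is precisely this last step: turning the invariant family of manifolds into a foliation with continuous transverse structure, rather than a mere lamination, which hinges on the uniform continuous dependence of the graph-transform fixed point on the base point. I emphasize that the leaves themselves are as smooth as $f$, since along the strong directions the dynamics is a genuine contraction or expansion with no resonance obstruction; the only low regularity one must accept is the transverse (H\"older) regularity of the distributions $E^{u}$ and $E^{s}$, which does not affect the smoothness of the individual leaves. The definition of dynamical coherence contained in the statement requires no proof.
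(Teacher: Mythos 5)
This statement is a remark plus a definition: the paper offers no proof of its own, simply citing Hirsch--Pugh--Shub \cite{HPS}, and the graph-transform (Hadamard--Perron) argument you sketch is precisely the method of that reference, so your proposal follows essentially the same route as the paper's source. Your outline is correct in substance (domination of $E^{u}$ over $E^{cs}$, contraction of the graph transform, globalization by iteration, continuity of plaques giving the foliation, and the observation that only transverse regularity is lost), with the only loose phrasing being the jump from uniqueness of the fixed point to ``unique integrability'' of $E^{u}$, which in the standard argument requires the additional backward-contraction comparison of nearby local leaves.
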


\begin{defi}
A $C^1$-diffeomorphism $f:\mathbb{T}^d \to \mathbb{T}^d$ is called derived from Anosov if it is isotopic to its action in the homology $A:H^1(\mathbb{T}^d)\to H^1(\mathbb{T}^d)$, which is a linear Anosov automorphism with three invariant subbundles $T(\mathbb{T}^d)= E_{A}^u \oplus E^{c}_{A} \oplus E^{s}_{A}.$  Recall that an isotopy is a homotopy which is a homemorphism. 
\end{defi}

The set of all partially hyperbolic diffeomorphisms of $\mathbb{T}^{d}$ is denoted by $\mathsf{PH}(\mathbb{T}^d)$. Let $A:\mathbb{T}^d\to \mathbb{T}^d$ be a linear Anosov with dominated splitting of the form $E_{A}^{ss}\oplus E_{A}^{ws}\oplus E_{A}^{wu}\oplus E_{A}^{uu}$, with $E^{c}_{A}= E_{A}^{ws}\oplus E_{A}^{wu}$ and $\dim E_{A}^{ws}=E_{A}^{wu}=1$. We consider the set of partially hyperbolic diffeomorphisms isotopic to $A$, all of them having the same dimension of stable and unstable bundle, that is, 
$$\mathsf{PH}_{A,s,u}(\mathbb{T}^{d})=\{f\in \mathsf{PH}(\mathbb{T}^d): f\sim A, \ \dim E_{f}^{s}=\dim E^{ss}_{A}, \ \dim E_{f}^{u}=\dim E^{uu}_{A}\}.$$

Here $f\sim A$ denotes an isotopy between $f$ and $A$. To simplify notation we will denote $\mathsf{PH}_{A,s,u}(\mathbb{T}^{d})$ as $\mathsf{PH}_{A}(\mathbb{T}^d)$, where the dimension of the bundles is implicitly understood. Now we consider $\mathsf{PH}^{0}_{A}(\mathbb{T}^{d})$ to be the connected component of $\mathsf{PH}_{A}(\mathbb{T}^{d})$ containing $A.$
Then, in \cite[Corollary~C]{FPS} it is proved that:

\begin{teor}[Fisher, Potrie, Sambarino]\label{mfps}
If $f\in \mathsf{PH}^{0}_{A}(\mathbb{T}^{d})$ and $\dim E^{c}_{f}=1$, then there is a unique maximal entropy measure which has entropy equal to the linear part.
\end{teor}

Recall that for every continuous map $f:\mathbb{T}^d\to \mathbb{T}^d$ there exists a unique linear map $A_{f}:\mathbb{R}^d\to \mathbb{R}^d$ such that the lift of $f$ to the universal covering map $\tilde{f}$ is homotopic to $A_{f}.$ We call $A_f$ the \textit{linear part} of $f$.
As consequence of \cite[Theorem~A]{R} the unique maximal entropy measure given by Theorem \ref{mfps} is a hyperbolic measure.
\vspace{0.2cm}

Since topological entropy plays such a central role, we mention the relationship between $h_{top}(f)$ and $h_{top}(A)$, in general, when $A$ is a factor of $f$ one just has $h_{top}(f)\geq h_{top}(A)$. 

Now, let $A_{c}:\mathbb{T}^{d}/\mathcal{F}^{c}_{A}\to \mathbb{T}^{d}/\mathcal{F}^{c}_{A}$ be the corresponding factor to the linear Anosov $A$. Here, $\mathbb{T}^{d}/\mathcal{F}^{c}_{A}$ denote the space of central leaves of $A$. In the context presented above, we obtain the following results:

\begin{thmx}\label{main:hyperbolicity}
Suppose $A$ admits a foliation by tori $\mathbb{T}^2$ tangent to $E^c_{A}$. Let $f\in \mathsf{PH}^{0}_{A}(\mathbb{T}^{d})\cap \mathrm{Diff}^{2}(\mathbb{T}^{d})$ and let $k_{0}:=h_{top}(A_{c})$. If $\mu$ is an ergodic measure such that $h_{\mu}(f)>k_{0}$, then $\mu$ is a hyperbolic measure with both positive and negative Lyapunov central exponents. In particular any maximal entropy measure is hyperbolic, provided that it exists.
\end{thmx}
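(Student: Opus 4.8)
The plan is to reduce the statement to a question about the signs of the two central Lyapunov exponents, and then to read those signs off from an entropy defect of the central factor. Write $\lambda^c_1\ge\lambda^c_2$ for the two Lyapunov exponents of $\mu$ along $E^c$, which by ergodicity and the Oseledets theorem are well defined and $\mu$-a.e.\ constant. Partial hyperbolicity forces every Lyapunov exponent along $E^s$ to be strictly negative and every exponent along $E^u$ to be strictly positive, so $\mu$ is hyperbolic with central exponents of both signs exactly when $\lambda^c_1>0>\lambda^c_2$. The entire argument thus amounts to proving these two strict inequalities.

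First I would set up the central factor. Let $\pi\colon\mathbb{T}^d\to N:=\mathbb{T}^d/\mathcal{F}^c$ be the quotient by the center foliation (whose leaves are compact two-tori by the results quoted from \cite{FPS}) and let $f_c\colon N\to N$ be the induced homeomorphism, so that $\pi\circ f=f_c\circ\pi$. Since the strong bundles $E^s,E^u$ are transverse to $\mathcal{F}^c$ and uniformly contracted/expanded, $f_c$ is Anosov, and being homotopic to the linear $A_c$ it is topologically conjugate to $A_c$ by Franks--Manning; hence $h_{top}(f_c)=h_{top}(A_c)=k_0$. Consequently, for the push-forward measure $\nu:=\pi_*\mu$ one has the crucial bound $h_\nu(f_c)\le h_{top}(f_c)=k_0$.

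Next I would decompose the entropy. Letting $\xi$ denote the measurable partition into center leaves, the Abramov--Rokhlin relative entropy formula gives
\begin{equation}\label{eq:AR}
h_\mu(f)=h_\nu(f_c)+h_\mu\bigl(f\mid\pi^{-1}\mathcal{B}_N\bigr),
\end{equation}
where the second term is the fiber (center) entropy. Combining \eqref{eq:AR} with $h_\nu(f_c)\le k_0$ and the hypothesis $h_\mu(f)>k_0$ yields $h_\mu\bigl(f\mid\pi^{-1}\mathcal{B}_N\bigr)>0$: the measure carries strictly positive entropy along the center fibers. To convert this into expansion I would invoke a relative version of the Margulis--Ruelle inequality applied to the two-dimensional center dynamics, which bounds the fiber entropy by the sum of the positive central exponents,
\[
h_\mu\bigl(f\mid\pi^{-1}\mathcal{B}_N\bigr)\le\max(\lambda^c_1,0)+\max(\lambda^c_2,0).
\]
Positivity of the left-hand side forces $\lambda^c_1>0$. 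For the negative exponent I would run the identical chain for $f^{-1}$: entropy is invariant under time reversal, so $h_\mu(f^{-1})=h_\mu(f)>k_0$ while $h_{top}(f_c^{-1})=k_0$, whence the fiber entropy of $f^{-1}$ is again positive; as the central exponents of $f^{-1}$ are $-\lambda^c_1\le-\lambda^c_2$, the same inequality gives $-\lambda^c_2>0$, i.e.\ $\lambda^c_2<0$. This establishes $\lambda^c_1>0>\lambda^c_2$. Finally, any maximal entropy measure satisfies $h_\mu(f)=h_{top}(f)\ge h_{top}(A)>k_0$ (the last inequality because the weak unstable direction $E^{wu}_A$ contributes positive entropy to $A$ but is collapsed in $A_c$); passing to its ergodic components, each has entropy $>k_0$ and is therefore hyperbolic by the above, so the measure itself is hyperbolic.

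The main obstacle I expect is the relative Margulis--Ruelle inequality, namely the identification of the conditional entropy $h_\mu(f\mid\pi^{-1}\mathcal{B}_N)$ with genuine expansion along the fibers and its domination by $\max(\lambda^c_1,0)+\max(\lambda^c_2,0)$; here the $C^2$ hypothesis and the tangency of $\mathcal{F}^c$ to $E^c$ should be exactly what is needed. A secondary technical point is to secure $h_{top}(f_c)=k_0$ rigorously, which rests on the leaf-conjugacy/structural input guaranteeing that $f_c$ is Anosov and conjugate to the linear factor $A_c$. Once these two ingredients are in place, the remaining steps (Abramov--Rokhlin and the time-reversal symmetry) are routine.
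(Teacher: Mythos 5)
Your overall architecture matches the paper's: compare $h_\mu(f)$ with the entropy of the center-leaf quotient $f_c$, use $h_{top}(f_c)=h_{top}(A_c)=k_0$, extract a positive central exponent, and get the negative one by running the argument for $f^{-1}$. The peripheral steps are sound (Abramov--Rokhlin, time reversal, and $h_{top}(f)\ge h_{top}(A)>k_0$ for the maximal-entropy conclusion). The problem is precisely the step you flag as the ``main obstacle'': the relative Margulis--Ruelle inequality $h_\mu\bigl(f\mid\pi^{-1}\mathcal{B}_N\bigr)\le\max(\lambda^c_1,0)+\max(\lambda^c_2,0)$. That inequality is the entire content of the theorem, and it is not an off-the-shelf result in this setting: $\pi:\mathbb{T}^d\to\mathbb{T}^d/\mathcal{F}^c$ is not a smooth (or even locally trivial) fibration, the quotient is only a compact metric space, the center leaves are merely $C^1$-immersed tori varying continuously, and the fiber maps go between different leaves. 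The relativized Ruelle inequalities available in the literature (Kifer; Bahnm\"uller--Bogensch\"utz) concern skew products with a fixed smooth fiber over an abstract base, so invoking them here would require a measurable trivialization of the leaf bundle with uniform $C^1$ control --- exactly the technical work your proposal leaves undone. As written, you have reduced the theorem to an unproven lemma, and the $C^2$ hypothesis plus tangency of $\mathcal{F}^c$ to $E^c$ do not by themselves produce it.

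The paper sidesteps the need for any such general inequality by arguing contrapositively, so that only the degenerate case $\max(\lambda^c_1,0)+\max(\lambda^c_2,0)=0$ is ever needed. If both central exponents are $\le 0$, then every positive exponent lives in $E^u$, hence the Pesin unstable manifolds coincide with the leaves of $\mathcal{F}^u$; Corollary 7.2.2 of \cite{LYII} then gives $h_\mu(f)=h_\mu(f,\mathcal{F}^u)$, and Tahzibi--Yang (Theorem \ref{2.4}) gives $h_\mu(f,\mathcal{F}^u)\le h_{\pi_*\mu}(f_c)\le k_0$, contradicting $h_\mu(f)>k_0$; this is Proposition \ref{2.5}. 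In other words, in the only case that matters, your inequality collapses to $h_\mu(f)\le h_{\pi_*\mu}(f_c)$, which is exactly what the existing results deliver. (Your stronger inequality is in fact true --- it can be derived from the Ledrappier--Young formula $h_i=\sum_{j\le i}\lambda_j\gamma_j$ with $\gamma_j\le\dim E_j$ combined with Theorem \ref{2.4} --- but that is machinery you would have to develop, not something to cite as a black box.) A secondary inaccuracy: Franks--Manning applies to Anosov diffeomorphisms, not to the homeomorphism $f_c$ of the a priori non-smooth quotient, so your justification of $h_{top}(f_c)=k_0$ is off; the paper obtains this identity from the leaf conjugacy of \cite[Theorem~B]{FPS}.
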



The class of systems introduced in \cite[Section~5]{BFSV} satisfies the hypothesis of the previous theorem. Such examples are partially hyperbolic systems produced by an isotopy from an Anosov system. Moreover, for more examples of systems that satisfy this type of hypotheses, see \cite{FPS} and references therein.

\begin{obs}
Useful references are {\rm\cite{CLPV, R}}, in which we can find examples of partially hyperbolic diffeomorphisms $f:\mathbb{T}^{4}\rightarrow \mathbb{T}^{4}$ with two-dimensional center bundle admitting high entropy measures, that is, measures $\mu$ such that $h_{\mu}(f)\geq h_{top}(A)$. The proof of the hyperbolicity for maximal entropy measures is the same for high entropy measures.
\end{obs}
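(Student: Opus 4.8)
The plan is to observe that every high entropy measure already lies in the class of measures to which Theorem~\ref{main:hyperbolicity} applies; the only thing left to check is the strict inequality $h_{top}(A) > k_0 = h_{top}(A_c)$. Granting this, an ergodic measure $\mu$ with $h_\mu(f) \geq h_{top}(A)$ automatically satisfies $h_\mu(f) > k_0$, so the hypothesis of Theorem~\ref{main:hyperbolicity} is met and the very same argument produces hyperbolicity with central Lyapunov exponents of both signs. Thus the content of the Remark reduces entirely to comparing the two entropies of the linear model.

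To establish the inequality, I would recall that for a hyperbolic linear automorphism of a torus the topological entropy equals the logarithm of the product of the moduli of its expanding eigenvalues. For $A$ the expanding bundle is $E^{wu}_A \oplus E^{uu}_A$, so, writing $\lambda^{wu}$ for the unique weak unstable eigenvalue (unique since $\dim E^{wu}_A = 1$),
$$h_{top}(A) = \log\bigl|\lambda^{wu}\bigr| + \log\bigl|\det\!\left(A|_{E^{uu}_A}\right)\bigr|.$$

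Next I would identify the factor $A_c$. Because $A$ preserves a foliation by $2$-tori tangent to $E^c_A = E^{ws}_A \oplus E^{wu}_A$, the subspace $E^c_A$ is rational, the quotient $\mathbb{T}^d/\mathcal{F}^c_A$ is a torus of dimension $d-2$, and $A_c$ is the induced linear hyperbolic automorphism. Passing to the quotient annihilates exactly the weak eigenvalues, leaving $E^{uu}_A$ as the only expanding direction of $A_c$, whence
$$k_0 = h_{top}(A_c) = \log\bigl|\det\!\left(A|_{E^{uu}_A}\right)\bigr|.$$
Subtracting the two displays yields $h_{top}(A) - k_0 = \log|\lambda^{wu}| > 0$, since $|\lambda^{wu}| > 1$.

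The only delicate point is the identification of $A_c$ as a linear hyperbolic automorphism of the quotient torus, together with the fact that collapsing the center removes precisely the weak eigenvalues from the entropy count; both rest on the rationality of $E^c_A$, which is guaranteed by the existence of the compact center foliation assumed earlier in the paper. With the strict inequality $h_{top}(A) > k_0$ in hand, the inclusion $\{\mu : h_\mu(f) \geq h_{top}(A)\} \subset \{\mu : h_\mu(f) > k_0\}$ is immediate, and so the hyperbolicity statement for maximal entropy measures transfers verbatim to high entropy measures, exactly as the Remark asserts.
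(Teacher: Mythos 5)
Your proposal is correct and is essentially the paper's own (implicit) argument: the Remark carries no separate proof, resting exactly on the observation that any ergodic $\mu$ with $h_{\mu}(f)\geq h_{top}(A)$ satisfies the hypothesis $h_{\mu}(f)>k_{0}$ of Theorem~\ref{main:hyperbolicity}, after which the proof runs verbatim. Your explicit verification of the strict inequality $h_{top}(A)>k_{0}=h_{top}(A_{c})$, via $h_{top}(A)=\log|\lambda^{wu}|+\log\bigl|\det\bigl(A|_{E^{uu}_{A}}\bigr)\bigr|$ together with the rationality of $E^{c}_{A}$ (guaranteed by the assumed foliation by tori), is precisely the one point the paper leaves unstated, and you carry it out correctly.
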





The remainder of the article is organized as follows. In the next section, we discuss some necessary preliminaries about entropy, Pesin theory and partially hyperbolic dynamics. In Section \ref{s4} 
we present the proof of our result.

\section{Definitions and preliminaries}
In this section, we introduce definitions and results about complexity of dynamical systems, Pesin theory and partially hyperbolic diffeomorphisms. 

\subsection{Entropy and Lyapunov exponents}\label{led}
We give an alternative definition of metric entropy as defined by Brin and Katok, with respect to a $f$-invariant ergodic probability measure. 
\vspace{0.1cm} 

Let $f:M\to M$ be a continuous map over a compact metric space and $\mu$ be an ergodic $f$-invariant probability measure. For $\delta\in (0,1)$, $n\in \mathbb{N}$ and $\epsilon>0$, a finite set $E\subset M$ is called an $(n,\epsilon,\delta)$-\textit{covering} if the union of the all $\epsilon$-balls, $B_{n}(x,\epsilon)=\{y\in M: d(f^{i}(x),f^{i}(y))<\epsilon\}$, centered at points $x\in E$ has $\mu$-measure greater than $1-\delta$. The \textit{metric entropy} is defined by $$h_{\mu}(f)=\displaystyle\lim_{\epsilon \rightarrow 0}\displaystyle\limsup_{n\rightarrow \infty}\frac{1}{n}\log \min\{\# E: E\subseteq M \ {\rm is  \ a \ }(n,\epsilon,\delta)-{\rm covering \ set} \}.$$

A set $E\subseteq M$ is said to be $(n,\epsilon)$-\textit{separated}, if for every $x, y \in E, x \neq y$, there exists $i\in\{0,\ldots, n-1\}$ such that $d(f^{i}x, f^{i}y)\geq \epsilon$. The \textit{topological entropy} is defined by $$h_{top}(f)=\displaystyle\lim_{\epsilon \rightarrow 0}\displaystyle\limsup_{n\rightarrow \infty}\frac{1}{n}\log \sup\{\# E: E\subseteq M \ is \ (n, \epsilon){\rm-separated} \}.$$

The variational principle states that $$\sup \{h_{\mu}(f):\mu\in \mathcal{M}(f)\}=\sup \{h_{\mu}(f):\mu\in \mathcal{M}_{e}(f)\}=h_{top}(f),$$ where $\mathcal{M}(f)$ denotes the set of $f$-invariant Borel probability measures on $M$ and $\mathcal{M}_{e}(f)$ denotes the set of ergodic measures on $M$ ($\mathcal{M}_{e}(f)\subset \mathcal{M}(f)$). 
\vspace{0.1cm}

A probability measure $\mu\in \mathcal{M}(f)$ such that $h_{\mu}(f)=h_{top}(f)$ is called a \textit{maximal entropy measure}. It is well-known that almost every ergodic component of a maximal entropy measure is an ergodic maximal entropy measure \cite{W}. For simplicity of the exposition, we are supposing that maximal entropy measures are ergodic.
\vspace{0.1cm}

For $r\geq 1$, let $f\in \mathrm{Diff}^{r}(M)$ be a $C^{r}$ diffeomorphism over a compact Riemannian manifold $M$ of dimension $d$. Given a vector $v\in T_{x}M$, the \textit{Lyapunov exponent} of $f$ at $x$ is the exponential growth rate of $Df$ along $v$, that is $$\lambda(x,v)=\displaystyle\lim_{n\rightarrow \pm \infty}\frac{1}{n}\log \|Df^{n}_{x}(v)\|$$ in case this amount is well defined.\label{lex} For every ergodic $f$-invariant probability measure $\mu$, from Oseledets' theorem (see \cite[Theorem~10.1]{M}) 
it follows that there exist real numbers $\lambda_{1}\leq \lambda_{2}\leq \cdots \leq \lambda_{d}$, called \textit{Lyapunov exponents} and a decomposition of the tangent bundle $T_{x}M=E_{1}(x)\oplus E_{2}(x)\oplus \cdots \oplus E_{d}(x)$ for $\mu$-a.e $x\in M$ such that for every $1\leq i\leq d$ and every $v\in E_{i}(x)-\{0\}$ we have $$\lambda(x,v)=\lambda_{i}.$$ 

The numbers $\lambda_{i}$ denote the Lyapunov exponents of the measure $\mu$ and $x$ is called a \textit{regular point}.

\begin{defi}\label{defhm}
An ergodic $f$-invariant probability measure $\mu$ is called \textbf{hyperbolic} if all its Lyapunov exponents are non-zero and there exist Lyapunov exponents with different signs. We say that a hyperbolic measure has \textit{saddle type} if $\lambda_{1}<0<\lambda_{d}.$ We denote $\mathcal{M}_{h}(f)$ the set of ergodic hyperbolic measures of saddle type.
\end{defi}

Given a compact $\Lambda\subset M$, invariant under $f\in \mathrm{Diff}^{r}(M), \ r\geq 1$, we say that $\Lambda$ admits a \textit{dominated splitting} if there exists a $Df$-invariant decomposition of the tangent bundle $T_{\Lambda}M=E\oplus F$, and constants $C > 0$, $\lambda\in (0,1)$ such that for every $x\in \Lambda$ and $n>0$ one has $$\|Df^{n}_{x}|_{E}\|\|Df_{f^{n}(x)}^{- n}|_{F}\|\leq C\lambda^{n}.$$

A \textit{hyperbolic set} for $f$ is a compact $f$-invariant set $\Lambda\subset M$ with a decomposition $T_{x}M=E^{s}(x)\oplus E^{u}(x)$ for all $x\in \Lambda$ such that for some $C>0$ and $\lambda\in (0,1),$ for all $x\in \Lambda$, $n\geq 0,$ $v^{s}\in E^{s}(x)$ and $v^{u}\in E^{u}(x),$ we have $$\|Df_{x}^{n}(v^{s})\|\leq C\lambda^{n}\|v^{s}\| \ {\rm{and}}  \ \|Df_{x}^{-n}(v^{u})\|\leq C\lambda^{n}\|v^{u}\|.$$ 
When $\Lambda=M$, $f$ is called \textbf{Anosov diffeomorphism}. By \cite{HPS}, if $\Lambda$ is a hyperbolic set, then for every $x\in \Lambda$ we define the following sets:
\begin{align*}
 W^{u}(x)&= \{y\in M: d(f^{-n}(x),f^{-n}(y))\xrightarrow{n\rightarrow\infty} 0\}, \\ 
 W^{s}(x)&= \{y\in M: d(f^{n}(x),f^{n}(y))\xrightarrow{n\rightarrow\infty} 0\}.
\end{align*}
These are $C^{r}$ injectively immersed sub-manifolds, called unstable and stable manifold, respectively. 





\subsection{Pesin Theory}
Let us present some results about Pesin theory based on the references \cite{BP, K1, P}. \vspace{0.1cm}

Let $\mu$ be a $f$-invariant probability measure, from Oseledets' theorem there exists a regular set $\mathcal{R}$ with $\mu(\mathcal{R})=1$. For all $x\in \mathcal{R}$ we have $$T_{x}M=\bigoplus_{\lambda_{i}(x)<0}E_{i}(x)\oplus E^{0}(x)\bigoplus_{\lambda_{i}(x)>0}E_{i}(x)$$ where $E^{0}(x)$ is the subspace generated by the vectors having zero Lyapunov exponents. For fixed $\epsilon>0$ and each $l\in \mathbb{N}$ we have that $$\mathcal{R}=\displaystyle\bigcup_{l=1}^{\infty}\mathcal{R}_{\epsilon,l},$$ where $\mathcal{R}_{\epsilon,l}$ are the \textit{Pesin blocks}. These sets are nested and compact. 

\begin{defi}
Let $\mu$ be an ergodic hyperbolic measure. We define the \textbf{stable index} or $s$-\textbf{index} (\textbf{unstable index} or $u$-\textbf{index}) of $\mu$ as the number of negative (positive) Lyapunov exponents. Here the exponents are counted with multiplicity.
\end{defi}

\begin{defi}
For $f\in \mathrm{Diff}^{r}(M)$ with $r>1$, the \textbf{stable Pesin manifold} of the point $x\in \mathcal{R}$ is $$W^{s}_{P}(x)=\{y\in M: \displaystyle\limsup_{n\to \infty}\frac{1}{n}\log d(f^{n}(x),f^{n}(y))<0\}.$$
Similarly one defines the \textbf{unstable Pesin manifold} as $$W^{u}_{P}(x)=\{y\in M: \displaystyle\limsup_{n\to \infty}\frac{1}{n}\log d(f^{-n}(x),f^{-n}(y))<0\}.$$
\end{defi}

\begin{obs}\label{defmho}
Stable and unstable Pesin manifolds of points in $\mathcal{R}$ are injectively immersed $C^r$-submanifolds. The usual Pesin theory requires a $C^{1+\alpha}$ regularity $(\alpha>0)$ of the dynamics \cite{BP}.
\end{obs}

Let us call $W^{s}_{loc}(x)$ the connected component of $W^{s}_{P}(x)\cap B(x,r)$ containing $x$, where $B(x,r)$ denotes the Riemannian ball of center $x$ and radius $r=r(\epsilon,l)>0$, which is sufficiently small but fixed. 

\begin{teor}[Stable Pesin Manifold Theorem \cite{P}]\label{vep}
Let $\alpha>1$ and $f\in \mathrm{Diff}^{\alpha}(M)$ be a diffeomorphism preserving a smooth measure $m$. Then, for each $l\in \mathbb{N}$ and small $\epsilon>0$, if $x\in \mathcal{R}_{\epsilon,l}$:
\begin{enumerate}
\item $W^{s}_{loc}(x)$ is a disk such that $T_{x}W^{s}_{loc}(x)=\bigoplus_{\lambda_{i}(x)>0}E_{i}(x)$;
\item $x\mapsto W^{s}_{loc}(x)$ is continuous over $\mathcal{R}_{\epsilon,l}$ in the $C^{1}$ topology.
\end{enumerate}
\end{teor}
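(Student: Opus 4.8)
The plan is to follow the classical Hadamard--Perron graph-transform scheme, adapted to the nonuniformly hyperbolic setting exactly as in Pesin's original work. Since the Oseledets splitting $T_xM=\bigoplus_i E_i(x)$ is only measurable and the contraction/expansion rates are controlled along orbits merely up to subexponential (tempered) factors, the first and most delicate step is to pass to a family of \emph{Lyapunov charts}. On each Pesin block $\mathcal{R}_{\epsilon,l}$ one builds charts $\Phi_x\colon B(0,r(\epsilon,l))\subset T_xM\to M$ with $\Phi_x(0)=x$, in which the local representatives $\tilde f_x=\Phi_{fx}^{-1}\circ f\circ\Phi_x$ have derivative at the origin that contracts the stable subspace $E^s(x)=\bigoplus_{\lambda_i(x)<0}E_i(x)$ by a factor $<1$ and, relative to the complementary directions, realizes a uniform dominated splitting; this uniformity is preserved by choosing $\epsilon$ smaller than half the spectral gap of $\mu$. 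The $C^{\alpha}$ regularity ($\alpha>1$) enters here to force the nonlinear remainder of $\tilde f_x$ to have arbitrarily small Lipschitz constant on the (subexponentially) shrinking charts $B(0,r(\epsilon,l))$.

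The second step is the graph transform. I would represent the local stable set, in the chart at $x$, as a Lipschitz graph $W^s_{loc}(x)=\{(v,\psi_x(v)):v\in E^s(x),\ |v|<r\}$ over the stable subspace, with slope $\le 1$. Pulling such graphs back along the forward orbit defines a Hadamard--Perron operator acting on the sequences $(\psi_{f^nx})_{n\ge 0}$ which, by the uniform contraction-and-domination estimates of the first step, is a contraction in the sup-metric; its unique fixed point is a forward-invariant graph all of whose points have exponentially decaying forward orbits, and hence coincides locally with $W^s_P(x)$. This establishes that $W^s_{loc}(x)$ is an embedded disk, the first half of item~(1).

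The regularity and tangent-space identification form the third step. Applying the fiber-contraction (section) theorem to the lifted graph transform acting on derivative data shows that the fixed point $\psi_x$ is $C^1$ and that $T_xW^s_{loc}(x)=E^s(x)=\bigoplus_{\lambda_i(x)<0}E_i(x)$, the sum of the stable Oseledets subspaces, which completes item~(1). For item~(2), continuity of $x\mapsto W^s_{loc}(x)$ in the $C^1$ topology over $\mathcal{R}_{\epsilon,l}$ follows because the charts $\Phi_x$, the hyperbolicity constants and the remainder estimates all vary continuously on the compact block, so the fixed point of the contraction---together with its derivative---depends continuously on the parameter $x$.

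The main obstacle is the nonuniformity itself: one must keep the graph transform a genuine contraction even though the admissible radius $r(\epsilon,l)$ and the charts shrink subexponentially along the orbit, and one must ensure that the $\epsilon$-loss built into the Lyapunov metric never consumes the spectral gap. This is precisely where the hypothesis $\alpha>1$ is indispensable, since Hölder control of $Df$ is what forces the nonlinear terms to be dominated by the linear contraction; with only $C^1$ regularity the invariant graphs need not be differentiable.
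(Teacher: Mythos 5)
Your outline is correct and is essentially the paper's own route: the paper gives no proof of this theorem, quoting it directly from Pesin \cite{P} (see also \cite{BP}), and Pesin's proof is exactly the scheme you describe --- Lyapunov charts on Pesin blocks to make the hyperbolicity uniform up to an $\epsilon$-loss smaller than the spectral gap, a Hadamard--Perron graph transform whose fixed point is the local stable disk, the fiber-contraction theorem for $C^1$ regularity and the tangency $T_{x}W^{s}_{loc}(x)=\bigoplus_{\lambda_{i}(x)<0}E_{i}(x)$, and continuity on the block from uniformity of all estimates there. Note only that the formula displayed in the paper's statement, $T_{x}W^{s}_{loc}(x)=\bigoplus_{\lambda_{i}(x)>0}E_{i}(x)$, has the sign reversed (a typo); your version with $\lambda_{i}(x)<0$ is the correct one, consistent with the paper's own remark immediately after the theorem that the dimension of $W^{s}_{loc}(x)$ equals the number of \emph{negative} Lyapunov exponents.
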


In particular, the dimension of the disk $W^{s}_{loc}(x)$ equals the number of negative Lyapunov exponents of $x$ with respect to $m$. An analogous statement holds for the unstable Pesin manifold.

\begin{obs}\label{op}
The global stable manifold states that for any $l_{k}\to +\infty$ satisfying $f^{l_{k}}(x)\in R_{\epsilon,l},$ one has $W_{P}^{s}(x)=\bigcup_{k}f^{-l_{k}}(W^{s}_{loc}(f^{l_{k}}(x))).$ Similarly, any $l_{k}\to +\infty$ satisfying $f^{-l_{k}}(x)\in R_{\epsilon,l},$ one has $W_{P}^{u}(x)=\bigcup_{k}f^{l_{k}}(W^{u}_{loc}(f^{-l_{k}}(x))).$ This property follows from the properties of the local stable manifold {\rm (Theorem \ref{vep})}. 
\end{obs}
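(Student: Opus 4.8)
The plan is to establish the stated identity by a double inclusion, the engine being the elementary observation that the condition defining $W^{s}_{P}$ is invariant under time-shifts. Concretely, for any fixed $m\in\mathbb{N}$ one has $\limsup_{n\to\infty}\frac1n\log d(f^{n}x,f^{n}y)=\limsup_{n\to\infty}\frac1n\log d(f^{n+m}x,f^{n+m}y)$, since reindexing by a constant does not alter an exponential rate; hence $y\in W^{s}_{P}(x)$ if and only if $f^{m}(y)\in W^{s}_{P}(f^{m}(x))$. I would record this shift-invariance first, as both inclusions rely on it, together with the tautology $W^{s}_{loc}(z)\subseteq W^{s}_{P}(z)$ that is built into the definition of the local manifold.

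For the inclusion $\supseteq$, I would take $y\in f^{-l_{k}}\big(W^{s}_{loc}(f^{l_{k}}(x))\big)$ for some $k$, so that $f^{l_{k}}(y)\in W^{s}_{loc}(f^{l_{k}}(x))\subseteq W^{s}_{P}(f^{l_{k}}(x))$. Applying shift-invariance with $m=l_{k}$ then yields $y\in W^{s}_{P}(x)$ at once. This direction uses only that the local manifold sits inside the global one and the invariance just established.

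The substantial inclusion is $\subseteq$. Given $y\in W^{s}_{P}(x)$, the definition provides an exponential bound $d(f^{n}x,f^{n}y)\le Ce^{-\gamma n}$ for some $\gamma>0$ and all large $n$. The return points $f^{l_{k}}(x)$ all lie in the single Pesin block $\mathcal{R}_{\epsilon,l}$, over which Theorem \ref{vep} furnishes local stable disks of one fixed radius $r=r(\epsilon,l)$ together with uniform hyperbolic estimates and the continuity of $z\mapsto W^{s}_{loc}(z)$. I would then invoke the graph characterization of these disks on the block: a point $z$ whose forward orbit remains within the fixed radius $r$ of the orbit of $f^{l_{k}}(x)$, and whose distance decays exponentially, necessarily lies on the disk $W^{s}_{loc}(f^{l_{k}}(x))$ through $f^{l_{k}}(x)$. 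Choosing $k$ large enough that $d(f^{n}x,f^{n}y)<r$ for every $n\ge l_{k}$ — possible by the exponential decay — and combining with $f^{l_{k}}(y)\in W^{s}_{P}(f^{l_{k}}(x))$ from shift-invariance, I conclude $f^{l_{k}}(y)\in W^{s}_{loc}(f^{l_{k}}(x))$, i.e. $y\in f^{-l_{k}}(W^{s}_{loc}(f^{l_{k}}(x)))$.

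The main obstacle is precisely this last step: one must know that, uniformly over the block $\mathcal{R}_{\epsilon,l}$, forward $r$-closeness is not merely necessary but sufficient for membership in the local stable manifold, and in particular that the nearby forward-contracting point lands in the connected component of $W^{s}_{P}(f^{l_{k}}(x))\cap B(f^{l_{k}}(x),r)$ through $f^{l_{k}}(x)$ rather than on some other plaque. This uniformity — the common size $r(\epsilon,l)$ of the disks and their continuous dependence on the base point — is exactly the content of the Stable Pesin Manifold Theorem, which is why the identity is stated as a consequence of it. A minor additional point is to check that the family $\{f^{-l_{k}}(W^{s}_{loc}(f^{l_{k}}(x)))\}_{k}$ is nested and increasing, so that the union is independent of the chosen return-time sequence $l_{k}$ to the block; this follows from shift-invariance together with the forward-contraction inclusions $f(W^{s}_{loc}(z))\subseteq W^{s}_{loc}(f(z))$ valid along the orbit inside the block.
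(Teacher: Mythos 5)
Your double-inclusion argument is correct and is exactly the fleshing-out of what the paper's remark asserts, since the paper offers no proof beyond citing Theorem \ref{vep}: the easy inclusion uses $W^{s}_{loc}(z)\subseteq W^{s}_{P}(z)$ plus shift-invariance of the defining limsup, and the hard inclusion uses the uniform size $r(\epsilon,l)$ of the local disks over the Pesin block together with the standard characterization that a point staying forward $r$-close with exponentially decaying distance must lie on the disk, which is precisely the content of the local stable manifold theorem being invoked. One caveat: your closing claim that $f(W^{s}_{loc}(z))\subseteq W^{s}_{loc}(f(z))$ along the orbit, and hence that the family $f^{-l_{k}}(W^{s}_{loc}(f^{l_{k}}(x)))$ is nested, is not valid in general — Pesin contraction is only asymptotic with tempered constants, so the image of a local disk may exit the ball $B(f(z),r)$ — but this is harmless, because independence of the union from the choice of the sequence $(l_{k})$ already follows from the identity itself (any two such unions equal $W^{s}_{P}(x)$), not from nestedness.
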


\subsection{Partially hyperbolic diffeomorphisms isotopic to Anosov}
Let us consider $f:\mathbb{T}^{d}\rightarrow \mathbb{T}^{d}$ a $C^{2}$ partially hyperbolic diffeomorphism. Suppose $f$ is isotopic to $A:\mathbb{T}^{d}\rightarrow\mathbb{T}^{d}$ along a path of partially hyperbolic diffeomorphisms, where $A:\mathbb{T}^{d}\rightarrow\mathbb{T}^{d}$ is a linear Anosov automorphism with a dominated splitting and foliation by tori $\mathbb{T}^{2}$ tangent to $E^{c}_{A}$. Thus, $f\in \mathsf{PH}_{A}^{0}(\mathbb{T}^{d})\cap \mathrm{Diff}^{2}(\mathbb{T}^{d})$.
\vspace{0.1cm}

By a classical Franks-Manning result \cite{F,MA} there exists a continuous surjection $H:\mathbb{T}^{d}\rightarrow \mathbb{T}^{d}$ homotopic to the identity such that  \begin{equation}\label{2.1}
 A\circ H=H\circ f.  
\end{equation}
Moreover, its lift $\widetilde{H}:\mathbb{R}^{d}\to  \mathbb{R}^{d}$ is a proper function that semiconjugates $\widetilde{f}:\mathbb{R}^{d}\to  \mathbb{R}^{d}$ with $\widetilde{A}:\mathbb{R}^{d}\to  \mathbb{R}^{d}$, and for some constant $K>0$, we have $$\|\widetilde{H}-Id\|_{C^{0}}\leq K.$$ 

Using the Franks-Manning semiconjugacy and other properties, in \cite[Theorem~A]{FPS} it is proved that:
\begin{teor}[Fisher-Potrie-Sambarino]\label{2.2}
If $f\in\mathsf{PH}_{A}^{0}(\mathbb{T}^{d})\cap \mathrm{Diff}^{2}(\mathbb{T}^{d})$, then $f$ is dynamically coherent. Moreover, if $A$ admits a center foliation by tori $\mathbb{T}^{2}$, then $f$ admits a center foliation with all central leaves compact.
\end{teor}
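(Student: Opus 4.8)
\emph{Setup and strong leaves.} The statement is due to Fisher--Potrie--Sambarino; the plan is to transfer the integrability and the compactness of the center leaves from the linear model $A$ to $f$ through the Franks--Manning semiconjugacy of \eqref{2.1}, using crucially that $\|\widetilde H-\mathrm{Id}\|_{C^{0}}\le K$. First I would pass to the universal cover $\mathbb{R}^{d}$, where $\widetilde A$ carries the affine foliations $\widetilde{\mathcal F}^{s}_{A},\widetilde{\mathcal F}^{u}_{A},\widetilde{\mathcal F}^{cs}_{A},\widetilde{\mathcal F}^{cu}_{A}$ tangent to the constant subbundles $E^{ss}_{A},E^{uu}_{A}$ and their sums with $E^{c}_{A}=E^{ws}_{A}\oplus E^{wu}_{A}$, the center leaves being the $2$-planes $p+E^{c}_{A}$, which by hypothesis project to the tori $\mathbb{T}^{2}$. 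The strong foliations $\widetilde{\mathcal F}^{s}_{f},\widetilde{\mathcal F}^{u}_{f}$ of $\widetilde f$ exist by partial hyperbolicity. The basic mechanism is a bounded-orbit comparison: from $\widetilde A\widetilde H=\widetilde H\widetilde f$ and $\|\widetilde H-\mathrm{Id}\|_{C^{0}}\le K$, for $y\in\widetilde{\mathcal F}^{s}_{f}(x)$ one has $d(\widetilde A^{n}\widetilde Hx,\widetilde A^{n}\widetilde Hy)\le d(\widetilde f^{n}x,\widetilde f^{n}y)+2K$, which stays bounded as $n\to+\infty$; since $\widetilde A$ is linear this forces $\widetilde Hy-\widetilde Hx\in E^{ss}_{A}\oplus E^{ws}_{A}\subseteq E^{cs}_{A}$, so $\widetilde H$ maps each strong stable leaf of $\widetilde f$ into a single affine leaf of $\widetilde{\mathcal F}^{cs}_{A}$, and dually each strong unstable leaf into a leaf of $\widetilde{\mathcal F}^{cu}_{A}$.

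\emph{Dynamical coherence.} To integrate $E^{cs}_{f}$ I would consider, for each affine center-stable leaf $\widetilde{\mathcal F}^{cs}_{A}(q)$, its preimage $\widetilde H^{-1}(\widetilde{\mathcal F}^{cs}_{A}(q))$. This set is $\widetilde f$-invariant and, by the previous step, saturated by strong stable leaves. The task is to prove it is a $C^{1}$ submanifold tangent to $E^{cs}_{f}$ and that distinct such submanifolds do not branch; for this I would first show the strong leaves of $\widetilde f$ are quasi-isometrically embedded in $\mathbb{R}^{d}$ (again via the bounded-distance semiconjugacy together with uniform contraction/expansion), obtaining a global product structure between the strong foliations of $\widetilde f$ and the affine foliations of $\widetilde A$; domination then pins the tangent spaces to $E^{cs}_{f}$. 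The same scheme applied to $\widetilde{\mathcal F}^{cu}_{A}$ integrates $E^{cu}_{f}$. Projecting to $\mathbb{T}^{d}$ yields the invariant foliations $\mathcal F^{cs}_{f},\mathcal F^{cu}_{f}$, hence dynamical coherence, and the center foliation is $\mathcal F^{c}_{f}=\mathcal F^{cs}_{f}\cap\mathcal F^{cu}_{f}$.

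\emph{Compactness of the center leaves.} Now center leaves exist. Fix $L=\mathcal F^{c}_{f}(x)$ with lift $\widetilde L=\widetilde{\mathcal F}^{c}_{f}(\tilde x)$. Since $L$ lies in a single center-stable leaf and a single center-unstable leaf, the previous step gives $\widetilde H(\widetilde L)\subseteq\widetilde{\mathcal F}^{cs}_{A}(\widetilde H\tilde x)\cap\widetilde{\mathcal F}^{cu}_{A}(\widetilde H\tilde x)=\widetilde H\tilde x+E^{c}_{A}=:P$, an affine $2$-plane projecting to a torus; in particular $\widetilde L$ lies in the $K$-neighborhood of $P$. I would then show that $\widetilde H|_{\widetilde L}\colon\widetilde L\to P$ is a proper homeomorphism (properness and leaf-injectivity coming from the global product structure above). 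By equivariance of the lift, a deck translation $v\in\mathbb{Z}^{d}$ preserves $\widetilde L$ exactly when it preserves $P$, that is when $v\in\Lambda_{c}:=\mathbb{Z}^{d}\cap E^{c}_{A}$, and $\Lambda_{c}$ is a rank-$2$ lattice precisely because $P$ projects to a compact torus. As $\widetilde H|_{\widetilde L}$ conjugates the $\Lambda_{c}$-actions, $\Lambda_{c}$ acts cocompactly on $\widetilde L$, so $L=\widetilde L/\Lambda_{c}$ is a compact surface.

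\emph{Main obstacle.} The bounded-orbit inclusions are routine; the two crux points lie elsewhere. First, upgrading the coarse bounded-distance product structure to the genuine unique integrability of $E^{cs}_{f}$ and $E^{cu}_{f}$: in dimension $d>3$ with a two-dimensional center one cannot invoke three-dimensional branching-foliation theory, so the argument must exploit the finer dominated splitting $E^{ss}_{A}\oplus E^{ws}_{A}\oplus E^{wu}_{A}\oplus E^{uu}_{A}$ in order to treat the weak-stable and weak-unstable directions separately and to exclude branching. Second, establishing that $\widetilde H$ is proper and leaf-injective along center leaves is exactly what converts ``$\widetilde L$ lies in a compact neighborhood of $P$'' into ``$L$ is compact''; this is the technical heart supplied by \cite{FPS}.
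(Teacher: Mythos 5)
First, note that the paper does not prove Theorem \ref{2.2} at all: it is quoted from \cite{FPS}, and Remark \ref{ok} only records how the hypotheses of \cite[Corollary~4.2]{FPS} are verified (via Propositions 5.1, 5.2 and 2.3 of that reference). So your proposal must be measured against the actual strategy of \cite{FPS}. Your first step (the bounded-orbit comparison through the semiconjugacy \eqref{2.1}, showing that $\widetilde H$ sends strong stable leaves of $\widetilde f$ into stable affine leaves of $\widetilde A$) is correct and standard. The problems begin afterwards. The assertion that $\widetilde H^{-1}(\widetilde{\mathcal F}^{cs}_{A}(q))$ is a $C^{1}$ submanifold tangent to $E^{cs}_{f}$ with no branching \emph{is} the theorem; nothing in your sketch proves it. The map $\widetilde H$ is merely continuous and typically collapses nontrivial continua, so preimages of affine leaves are a priori just closed sets, and neither quasi-isometry of strong leaves nor ``domination pins the tangent spaces'' yields integrability of $E^{cs}_{f}$. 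Crucially, your sketch never uses the hypothesis $f\in\mathsf{PH}^{0}_{A}(\mathbb{T}^{d})$ --- that the isotopy runs inside the set of partially hyperbolic diffeomorphisms --- yet this is exactly what makes the theorem accessible: \cite{FPS} prove that the SADC property and the global product structure hold for $A$ and propagate along paths of partially hyperbolic diffeomorphisms (they are open and closed properties), hence hold on the whole connected component, and dynamical coherence is then deduced from \cite[Theorem~4.1 and Corollary~4.2]{FPS} by iterating the non-invariant SADC foliations, a graph-transform-type argument --- not by pulling back leaves under $H$. A proof that ignores the connected-component hypothesis is not available in this generality; as you yourself note, branching-foliation theory is unavailable for two-dimensional center in $d>3$.

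Second, the compactness step contains a claim that is false in general: that $\widetilde H|_{\widetilde L}\colon \widetilde L\to P$ is a homeomorphism (``leaf-injectivity''). The Franks--Manning map is only a semiconjugacy; for derived-from-Anosov systems its fibers are typically nontrivial continua contained in center leaves, so $H$ need not be injective on any center leaf, and global product structure (which concerns intersections of $u$-leaves with $cs$-leaves) does not give such injectivity. What is true --- and this is \cite[Theorem~B]{FPS}, invoked later in this paper's proof of Theorem \ref{main:hyperbolicity} --- is that $H$ induces a homeomorphism between the \emph{leaf spaces}: the preimage $H^{-1}(\mathcal F^{c}_{A}(Hx))$ is exactly the single center leaf $\mathcal F^{c}_{f}(x)$. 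Once that is known, compactness is immediate, since such a preimage is a closed subset of the compact manifold $\mathbb{T}^{d}$. Your deck-group argument also implicitly needs this single-leaf statement: a translation $v\in\Lambda_{c}$ preserves $\widetilde H^{-1}(P)$ by equivariance, but if that preimage were a union of several lifted leaves, $v$ could permute them rather than preserve $\widetilde L$, and cocompactness of the $\Lambda_{c}$-action on $\widetilde L$ would not follow.
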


\subsection{Partially hyperbolic diffeomorphisms with all center leaves compact}
We denote by $\mathcal{F}^{c}_{loc}(x)$ the connected component of $\mathcal{F}^{c}(x)\cap B(x,r)$ containing $x$. For any $f:M\rightarrow M$ dynamically coherent partially hyperbolic diffeomorphism and any two points $x,y$ with $y\in \mathcal{F}^{u}(x)$, there exists a neighborhood $U_{x}$ of $x$ in $\mathcal{F}^{c}(x)$ and a homeomorphism $H^{u}_{x,y}:U_{x}\rightarrow \mathcal{F}^{c}(y)$ such that $H_{x,y}^{u}(x)=y$ and $H^{u}_{x,y}(z)\in \mathcal{F}^{u}(z)\cap \mathcal{F}^{c}_{loc}(y).$ The homeomorphisms $H^{u}_{x,y}$ are called \textit{local unstable holonomies} for $y\in \mathcal{F}^{u}(x)$. Similarly, one may define \textit{local stable holonomies} $H^{s}_{x,y}$ for $y\in \mathcal{F}^{s}(x).$
\vspace{0.1cm}

We say that $f$ admits global unstable holonomy if for any $y\in\mathcal{F}^{u}(x)$ the holonomy is defined globally $H_{x,y}^{u}:\mathcal{F}^{c}(x)\rightarrow \mathcal{F}^{c}(y)$. Similarly, we define the notion of global stable holonomy, and $f$ admits \textit{global holonomies} when it admits global stable and unstable holonomies.
\vspace{0.1cm}

The following results can be found in \cite{TY}. Let $f:M\to M$ be a $C^2$ partially hyperbolic diffeomorphism over a compact manifold satisfying the following conditions:
\begin{enumerate}[label=H.\arabic*]
\item \label{H1} $f$ is dynamically coherent with all center leaves compact;
\item \label{H2} $f$ admits global holonomies;
\item \label{H3} $f_{c}$ is a transitive topological Anosov homeomorphism (see \cite[Chapter~4]{M}), where $f_{c}:M/\mathcal{F}^{c}\rightarrow M/\mathcal{F}^{c}$ is the induced dynamics satisfying $f_{c}\circ \pi=\pi\circ f$ and $\pi:M\rightarrow M/\mathcal{F}^{c}$ is the natural projection to the space of central leaves.
\end{enumerate}

\begin{obs}
A transitive topological Anosov homeomorphism shares many similar properties with Anosov diffeomorphisms. For example, it has a pair of topological foliations which play the same role of stable/unstable foliations and it has a unique maximal entropy measure. 
\end{obs}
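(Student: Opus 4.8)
The plan is to unpack the definition of a transitive topological Anosov homeomorphism $g$ on a compact metric space $X$ (in our application $g=f_{c}$ and $X=M/\mathcal{F}^{c}$) as an expansive homeomorphism enjoying the shadowing (pseudo-orbit tracing) property, and to recover from these two axioms the hyperbolic-like structure claimed. Write $c>0$ for the expansivity constant and fix a small $\epsilon\in(0,c)$. The first task is the pair of topological foliations. For $x\in X$ I would define the local stable and unstable sets
$$W^{s}_{\epsilon}(x)=\{y: d(g^{n}x,g^{n}y)\le \epsilon\ \forall n\ge 0\}, \qquad W^{u}_{\epsilon}(x)=\{y: d(g^{-n}x,g^{-n}y)\le \epsilon\ \forall n\ge 0\},$$
and verify, using expansivity, that points of $W^{s}_{\epsilon}(x)$ are forward asymptotic to $x$ (symmetrically for $W^{u}_{\epsilon}$), so that the global sets $W^{s/u}(x)=\{y: d(g^{\pm n}x,g^{\pm n}y)\to 0\}$ are exactly the $g$-orbit saturations of the local ones. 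Shadowing then delivers canonical coordinates: there is $\delta>0$ such that whenever $d(x,y)<\delta$ the intersection $W^{s}_{\epsilon}(x)\cap W^{u}_{\epsilon}(y)$ consists of a single point $[x,y]$ depending continuously on $(x,y)$. The resulting decompositions of $X$ into stable and unstable sets, equipped with this local product structure, are the topological stable/unstable foliations; they play the structural role of the smooth invariant foliations even though their leaves are only topological objects.

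For the maximal entropy measure I would separate existence from uniqueness. Existence is soft: an expansive homeomorphism is entropy-expansive (the infinite Bowen ball $\bigcap_{n\in\mathbb{Z}}g^{-n}\overline{B(\cdot,c)}$ supports no entropy), so by Bowen's theorem the map $\mu\mapsto h_{\mu}(g)$ is upper semicontinuous on the weak$^{\ast}$-compact space $\mathcal{M}(g)$. By the variational principle its supremum equals $h_{top}(g)$ and is attained, producing at least one maximal entropy measure.

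Uniqueness is the crux and is where I would spend the effort. The strategy is to establish the specification property and then invoke Bowen's uniqueness theorem, which states that a homeomorphism with specification carries exactly one measure of maximal entropy. Transitivity together with shadowing and expansivity yields specification, but with the standard caveat that the classical form requires topological mixing; I would handle a merely transitive $g$ through the spectral decomposition for expansive systems with shadowing, writing $X=X_{0}\cup\cdots\cup X_{r-1}$ as a disjoint union of closed sets cyclically permuted by $g$ with $g^{r}|_{X_{0}}$ topologically mixing. On $X_{0}$ the power $g^{r}$ has specification and hence a unique maximal entropy measure $\nu_{0}$; averaging $\nu_{0}$ over the cycle produces a $g$-invariant measure, and one checks that any $g$-maximal entropy measure restricts to a $g^{r}$-maximal entropy measure on each piece, forcing it to coincide with this average. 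An equivalent and perhaps cleaner route is Bowen's finite-to-one coding of expansive systems with shadowing by a subshift of finite type: transitivity makes the coding subshift transitive, the Parry measure is its unique maximal entropy measure, and the bounded-to-one semiconjugacy transports uniqueness to $g$.

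The main obstacle I anticipate is precisely the transitive-but-not-mixing gap in the uniqueness argument: the spectral decomposition and the verification that the averaged measure is the unique maximizer require care, and the coding route demands control of the failure of injectivity of the semiconjugacy on the entropy-carrying part. The construction of the foliations in the first part, by contrast, is routine once canonical coordinates are in hand, the only conceptual point being that \emph{foliation} must be read in the topological (local product structure) sense rather than as a smooth lamination.
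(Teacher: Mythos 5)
Your proposal is essentially correct, but there is nothing in the paper to compare it against: the statement is a background \emph{remark}, stated without proof, which the paper treats as known --- the notion of topological Anosov homeomorphism is taken from \cite[Chapter~4]{M}, and the remark itself (together with hypotheses H.1--H.3) is imported from \cite{TY}. What you have done is reconstruct the standard folklore argument that these references implicitly invoke, and your reconstruction is sound: expansivity plus shadowing yields uniformly contracting local stable/unstable sets and, via shadowing of the concatenated pseudo-orbit together with expansivity (take $\epsilon<c/2$ so the intersection point $[x,y]$ is unique), the canonical coordinates giving the two topological foliations; expansivity gives upper semicontinuity of $\mu\mapsto h_{\mu}(g)$ and hence existence of a maximal entropy measure; and uniqueness follows Bowen's specification theorem \cite{B}, with the transitive-but-not-mixing case handled exactly as you say, by the decomposition of $X$ into closed sets $X_{0},\dots,X_{r-1}$ cyclically permuted by $g$ with $g^{r}|_{X_{0}}$ mixing (this decomposition is available for expansive homeomorphisms with shadowing), uniqueness for $g^{r}|_{X_{0}}$, and the observation that $g$ conjugates $g^{r}|_{X_{i}}$ with $g^{r}|_{X_{i+1}}$, so any ergodic maximal entropy measure of $g$ must be the cyclic average of $\nu_{0}$. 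Two small points deserve attention if you write this out in full: in the coding route, transporting uniqueness from the subshift requires both that every maximal entropy measure of $g$ \emph{lifts} (which holds because a bounded-to-one extension preserves the entropy of invariant measures) and that the coding subshift is irreducible, which is not automatic from transitivity of $g$ without inspecting Bowen's construction --- so the spectral-decomposition route you give first is the safer one; and your asymptoticity claim for $W^{s}_{\epsilon}(x)$ genuinely needs shadowing in addition to expansivity, as expansivity alone does not force local stable sets to contract. Neither point is a gap in the sketch, since you flagged both; your proof fills in precisely what the paper leaves to the literature.
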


\begin{defi}
For every $f$-invariant probability $\mu$, we say a measurable partition $\xi$ is \textbf{$\mu$-adapted} (subordinated) to the foliation $\mathcal{F}$ if the following conditions are satisfied:
\begin{enumerate}
\item There is $r_{0}>0$ such that $\xi(x)\subset B^{\mathcal{F}}_{r_{0}}(x)$ for $\mu$ almost every $x$ where $B^{\mathcal{F}}_{r_{0}}(x)$ is the ball inside of the leaf $\mathcal{F}(x);$
\item $\xi(x)$ contains an open neighborhood of $x$ inside $\mathcal{F}(x);$
\item $\xi$ is increasing; that is, for $\mu$ almost every $x,\ \xi(x)\subset f(\xi(f^{-1}(x))).$ 
\end{enumerate}
\end{defi}

\begin{defi}
For every $f$-invariant probability $\mu$, the \textbf{partial entropy} of $f$ along the expanding foliation $\mathcal{F}^{u}$ is defined by \begin{equation*}
    h_{\mu}(f,\mathcal{F}^{u})=H_{\mu}(f^{-1}\xi^{u}|\xi^{u})=\displaystyle\int_{M}-\log \mu_{z}^{u}(f^{-1}\xi^{u}(z))d\mu(z),
\end{equation*}
where $\xi^{u}$ is a partition $\mu$-adapted to the foliation $\mathcal{F}^{u}$ and $\{\mu_{z}^{u}\}$ is the conditional probabilities along leaves of  $\mathcal{F}^{u}$.
\end{defi}

\begin{obs}
The previous definition of partial entropy is different from the Ledrappier-Young unstable entropy, because we may have positive exponents in the central direction. 
\end{obs}

\begin{teor}[Tahzibi-Yang]\label{2.4}
Let $f$ be a $C^{2}$ partially hyperbolic diffeomorphism satisfying {\rm\ref{H1}, \ref{H2}} and {\rm\ref{H3}}. Suppose $\mu$ to be an $f$-invariant probability measure. Then, $h_{\mu}(f,\mathcal{F}^{u})\leq h_{\pi_{\ast}\mu}(f_{c})$. 
\end{teor}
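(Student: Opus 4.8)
The plan is to project the unstable foliation $\mathcal{F}^{u}$ of $f$ down to the topological unstable foliation of the Anosov homeomorphism $f_{c}$, and to show that this projection carries the partial entropy $h_{\mu}(f,\mathcal{F}^{u})$ to the unstable partial entropy of $f_{c}$ relative to $\pi_{\ast}\mu$, which is in turn dominated by the total entropy $h_{\pi_{\ast}\mu}(f_{c})$. As recalled after hypotheses \ref{H1}--\ref{H3}, the transitive topological Anosov homeomorphism $f_{c}$ possesses topological stable and unstable foliations; denote the latter by $\mathcal{F}^{u}_{c}$.

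First I would establish that $\pi$ restricts to a homeomorphism of each leaf $\mathcal{F}^{u}(x)$ onto the unstable set $\mathcal{F}^{u}_{c}(\pi(x))$. Injectivity amounts to the statement that each compact center leaf meets $\mathcal{F}^{u}(x)$ in at most one point, which follows from the transversality of $E^{u}$ and $E^{c}$ together with the fact that the global unstable holonomies of \ref{H2} are well defined; surjectivity and continuity follow from the semiconjugacy $\pi\circ f=f_{c}\circ\pi$ and from $\pi$ mapping $\mathcal{F}^{u}$ into the unstable sets of the expansive homeomorphism $f_{c}$. Using the global holonomies of \ref{H2}, these leafwise homeomorphisms can be assembled measurably, so that a $\pi_{\ast}\mu$-adapted partition $\eta^{u}$ subordinate to $\mathcal{F}^{u}_{c}$ lifts to a $\mu$-adapted partition $\xi^{u}$ subordinate to $\mathcal{F}^{u}$ with $\pi(\xi^{u}(x))=\eta^{u}(\pi(x))$ for $\mu$-a.e. $x$.

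Next I would compare the two partial entropies through Rokhlin disintegration. Writing $h_{\mu}(f,\mathcal{F}^{u})=H_{\mu}(f^{-1}\xi^{u}\mid\xi^{u})$ and $h_{\pi_{\ast}\mu}(f_{c},\mathcal{F}^{u}_{c})=H_{\pi_{\ast}\mu}(f_{c}^{-1}\eta^{u}\mid\eta^{u})$, the relation $\pi\circ f=f_{c}\circ\pi$ combined with the leafwise bijection of the previous step sets up a correspondence between the atoms of $f^{-1}\xi^{u}$ inside $\xi^{u}(x)$ and those of $f_{c}^{-1}\eta^{u}$ inside $\eta^{u}(\pi(x))$. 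Granting that the conditional measures $\mu^{u}_{x}$ of $\mu$ along $\mathcal{F}^{u}$ push forward under $\pi$ to the unstable conditionals of $\pi_{\ast}\mu$, the two conditional entropies coincide and $h_{\mu}(f,\mathcal{F}^{u})=h_{\pi_{\ast}\mu}(f_{c},\mathcal{F}^{u}_{c})$. Finally, since $\mathcal{F}^{u}_{c}$ is a (topologically) expanding foliation for $f_{c}$, the general inequality that the partial entropy along an expanding foliation does not exceed the total metric entropy gives $h_{\pi_{\ast}\mu}(f_{c},\mathcal{F}^{u}_{c})\le h_{\pi_{\ast}\mu}(f_{c})$, which closes the estimate.

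I expect the main obstacle to be the disintegration-compatibility step: proving that $\pi_{\ast}\mu^{u}_{x}$ is a.e. the unstable conditional of $\pi_{\ast}\mu$. The difficulty is that forming $\pi_{\ast}\mu$ averages over the compact center fibers, so one cannot simply transport the disintegration of $\mu$ along $\mathcal{F}^{u}$; the argument must use the global holonomies of \ref{H2} to identify the unstable conditionals living on distinct center leaves, and then invoke the essential uniqueness of Rokhlin disintegrations to match the two families of conditional measures. A secondary technical point is checking that the lifted partition $\xi^{u}$ is genuinely $\mu$-adapted (increasing, with atoms of uniformly bounded leaf-diameter), which reduces to a uniform control of the leafwise homeomorphisms $\pi|_{\mathcal{F}^{u}(x)}$ and is a consequence of the compactness of the center leaves and the continuity of the holonomies.
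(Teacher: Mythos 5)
The paper itself contains no proof of this statement: it is imported verbatim from Tahzibi--Yang \cite{TY} and used as a black box, so your proposal has to be judged against the argument in that reference. Your first two steps are fine and are part of the standard setup there: injectivity of $\pi$ on unstable leaves follows from the global product structure (this is essentially the Lemma proved in Section \ref{s4} of the paper), and one can indeed lift a partition $\eta^u$ of the quotient to an adapted partition $\xi^u$ with $\pi(\xi^u(x))\subseteq \eta^u(\pi(x))$. The genuine gap is the step you yourself flag as the main obstacle, and it is not a technical point that ``essential uniqueness of Rokhlin disintegrations'' can repair: the identity $\pi_*\mu^u_x=(\pi_*\mu)^u_{\pi(x)}$ is false in general. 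Uniqueness of disintegration applies to one partition at a time, and two different partitions are in play: the partition into $\mathcal{F}^u$-plaques is strictly finer than $\pi^{-1}(\eta^u)$, because over a single atom $\eta^u(\pi(x))$ there sits an entire center leaf's worth of distinct unstable plaques, each projected homeomorphically onto $\eta^u(\pi(x))$. What uniqueness actually yields is $(\pi_*\mu)^u_{\pi(x)}=\pi_*\bigl(\mu_{\pi^{-1}\eta^u(\pi(x))}\bigr)$, and by transitivity of disintegration $\mu_{\pi^{-1}\eta^u(\pi(x))}$ is the \emph{average} of the leafwise conditionals $\mu^u_y$ over $y$ in the center fiber. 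Your identity would require $\pi_*\mu^u_y$ to be the same measure for a.e.\ $y$ in a center leaf; that is precisely $u$-invariance of the disintegration of $\mu$, which in \cite{TY} is the rigidity \emph{conclusion} of the equality case $h_\mu(f,\mathcal{F}^u)=h_{\pi_*\mu}(f_c)$ (their ``invariance principle''), not a property of arbitrary invariant measures.

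Concretely, your chain of equalities cannot hold, because the inequality in the theorem is strict for some measures. Take $f(x,v)=(Ax,\mathbb{P}B(x)v)$ on $\mathbb{T}^2\times\mathbb{P}^1$, where $A$ is a linear Anosov map, $B:\mathbb{T}^2\to SL(2,\mathbb{R})$ is smooth, $C^0$-close to the identity (so that $f$ is partially hyperbolic and \ref{H1}--\ref{H3} hold, with $f_c=A$), and $B$ is pinching and twisting in the sense of Avila--Viana, so that its exponent over $\mathrm{Leb}$ is positive and its Oseledets directions are not invariant under unstable holonomies. Let $\mu$ be the invariant measure carried by the graph of the stable Oseledets section $x\mapsto E^s_B(x)$. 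Then $\pi_*\mu=\mathrm{Leb}$, and by Ledrappier--Young \cite{LYII} applied to $A$ one has $h_{\pi_*\mu}(f_c,\mathcal{F}^u_c)=h_{\mathrm{Leb}}(A)=h_{top}(A)>0$; so your claimed equality would give $h_\mu(f,\mathcal{F}^u)=h_{\pi_*\mu}(f_c)$, and the invariance principle of \cite{TY} would then force the center conditionals $\delta_{E^s_B(x)}$ to be invariant under unstable holonomies, i.e.\ force $E^s_B$ to be $u$-holonomy invariant, contradicting the choice of $B$. (In fact $h_\mu(f,\mathcal{F}^u)=0$ here, since the unstable conditionals of this graph measure are atomic.) The theorem is genuinely one-sided, and the mechanism producing it is exactly the center averaging you tried to suppress: $(\pi_*\mu)^u$ is an average of the projected conditionals $\pi_*\mu^u_y$, and conditional entropy can only increase under such averaging, by concavity of $t\mapsto -t\log t$. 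A correct proof must exploit this one-sidedness --- for instance by comparing $H_\mu(f^{-1}\xi^u\mid\xi^u)$ with the entropy of a finite generating partition of the expansive quotient $f_c$, pulled back to a center-saturated partition upstairs, as is done in \cite{TY} --- rather than aim at an identification of the two families of conditional measures.
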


\section{Proof of Theorem \ref{main:hyperbolicity}}\label{s4}
We start this section with an important remark. 

\begin{obs}
For $f\in \mathsf{PH}(\mathbb{T}^d)$ there exist $f$-invariant foliations $\mathcal{F}^{ss}$ and $\mathcal{F}^{uu}$ tangent to $E^{ss}_{f}$ and $E^{uu}_{f}$
respectively that we call the \textit{strong foliations}. Moreover, $\mathcal{F}^{cs}$ and $\mathcal{F}^{cu}$ are tangent to $E_{f}^{c}\oplus E_{f}^{ss}$ and $E_{f}^{c}\oplus E_{f}^{uu}$ respectively, when $f$ is dynamically coherent. 
\end{obs}


\begin{lemma}\label{gh}
Every $f:\mathbb{T}^{d}\rightarrow\mathbb{T}^{d}$ under conditions of Theorem \rm{\ref{main:hyperbolicity}} admits global holonomies, that is, for every $x, y\in \mathbb{T}^{d}$ with $y\in \mathcal{F}^{u}(x)$ and every $z\in \mathcal{F}^{c}(x)$ there is a unique $w$ such that $w \in \mathcal{F}^{u}(z)\cap \mathcal{F}^{c}(y).$ 
\end{lemma}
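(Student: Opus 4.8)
The plan is to lift everything to the universal cover $\mathbb{R}^{d}$ and to reduce the statement to a global product structure inside a single center-unstable leaf, which can then be read off from the linear model $A$ through the Franks--Manning semiconjugacy.

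First I would reduce the problem. Since $z\in\mathcal{F}^{c}(x)$ and $y\in\mathcal{F}^{u}(x)$, the three points lie in a common center-unstable leaf $L:=\mathcal{F}^{cu}(x)$, which is subfoliated by $\mathcal{F}^{c}$ (two-dimensional) and by $\mathcal{F}^{u}$ ($u$-dimensional). Fix a lift $\widetilde{x}$ and lifts $\widetilde{y}\in\widetilde{\mathcal{F}}^{u}(\widetilde{x})$, $\widetilde{z}\in\widetilde{\mathcal{F}}^{c}(\widetilde{x})$, and let $\widetilde{L}$ be the corresponding lifted leaf. By dynamical coherence (Theorem \ref{2.2}) together with the SADC property, $\widetilde{L}$ is a properly embedded plane lying at bounded Hausdorff distance from the affine plane $P:=\widetilde{H}(\widetilde{x})+\big(E^{ws}_{A}\oplus E^{wu}_{A}\oplus E^{uu}_{A}\big)$. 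The lemma is then equivalent to showing that inside $\widetilde{L}$ every unstable leaf meets every center leaf in exactly one point: the desired $w$ is the intersection of $\widetilde{\mathcal{F}}^{u}(\widetilde{z})$ with $\widetilde{\mathcal{F}}^{c}(\widetilde{y})$, projected back to $\mathbb{T}^{d}$.

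The next step is to transport the trivial product structure of the linear model. Using $\|\widetilde{H}-\mathrm{Id}\|_{C^{0}}\leq K$ and $\widetilde{A}\circ\widetilde{H}=\widetilde{H}\circ\widetilde{f}$, the semiconjugacy $\widetilde{H}$ is proper, maps $\widetilde{\mathcal{F}}^{u}(\widetilde{z})$ into an unstable leaf of $\widetilde{A}$, maps each $f$-center leaf into an $E^{c}_{A}$-translate (a center leaf of $\widetilde{A}$, a property of the Franks--Manning semiconjugacy in this setting), and has fibers contained in center leaves. Hence $\widetilde{H}$ descends modulo the center foliation, inducing a map $\bar{H}$ from the center-leaf space $Q:=\widetilde{L}/\widetilde{\mathcal{F}}^{c}$ to $P/E^{c}_{A}\cong\mathbb{R}^{u}$. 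A center leaf $\widetilde{\mathcal{F}}^{c}(\widetilde{y})$ corresponds to a single point $q_{y}\in Q$, so $w$ is exactly the point of $\widetilde{\mathcal{F}}^{u}(\widetilde{z})$ that the center projection $\bar{\pi}\colon\widetilde{L}\to Q$ sends to $q_{y}$. Existence and uniqueness of $w$ thus amount to showing that $\bar{\pi}$ restricts to a homeomorphism $\widetilde{\mathcal{F}}^{u}(\widetilde{z})\to Q$: injectivity would follow from the fact that $\widetilde{H}$ is injective on unstable leaves (two points of one unstable leaf whose $\widetilde{H}$-images agree stay at distance $\leq 2K$ under all forward iterates, which is incompatible with unstable expansion unless they coincide), and surjectivity from properness of $\bar{\pi}|_{\widetilde{\mathcal{F}}^{u}(\widetilde{z})}$ together with invariance of domain on $\mathbb{R}^{u}$.

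The hard part is precisely the control of the projection $\bar{\pi}$ on the unstable leaf, equivalently ruling out that $\widetilde{\mathcal{F}}^{u}(\widetilde{z})$ either misses some center leaves or meets one of them twice. The crude semiconjugacy estimate only locates images up to the additive constant $K$, so by itself it cannot separate the weak-unstable direction $E^{wu}_{A}$ from the strong-unstable direction $E^{uu}_{A}$, and closing this one-dimensional gap is the crux. I would close it using the SADC almost-parallelism, which forces the lifted strong-unstable and center-unstable leaves to stay within bounded Hausdorff distance of their linear models and hence to be uniformly transverse and quasi-isometrically embedded, together with the compactness of the center leaves, which guarantees that the local unstable holonomies $H^{u}_{x,y}$ extend without obstruction along the (compact) center leaf and close up to a single-valued global map. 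Once the unstable leaf is shown to be a complete cross-section to the center foliation inside $\widetilde{L}$, existence and uniqueness of $w$ follow, and projecting to $\mathbb{T}^{d}$ gives the claim; the stable global holonomy is obtained by applying the same argument to $f^{-1}$.
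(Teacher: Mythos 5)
The paper's proof is two lines: by Remark~\ref{ok} the map $f$ satisfies the hypotheses of \cite[Theorem~4.1]{FPS}, which directly provides the global product structure between $\widetilde{\mathcal{F}}^{cs}$ and $\widetilde{\mathcal{F}}^{u}$ in $\mathbb{R}^{d}$; the unique point $\widetilde{\mathcal{F}}^{u}(\widetilde{z})\cap\widetilde{\mathcal{F}}^{cs}(\widetilde{y})$ then lies in $\widetilde{\mathcal{F}}^{c}(\widetilde{y})$ because it also belongs to $\widetilde{\mathcal{F}}^{cu}(\widetilde{y})$, and the lemma follows. Your proposal does not cite this product structure but attempts to reprove it through the Franks--Manning semiconjugacy, i.e.\ you are in effect redoing the Fisher--Potrie--Sambarino theorem rather than invoking it, and that is where the attempt breaks down.

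You correctly isolate the crux: since $\|\widetilde{H}-\mathrm{Id}\|_{C^{0}}\leq K$, the semiconjugacy cannot distinguish $E^{wu}_{A}$ from $E^{uu}_{A}$, so it cannot by itself prevent a lifted unstable leaf from meeting a center leaf twice (the difference of the $\widetilde{H}$-images of two such points would lie in $E^{wu}_{A}$, which also invalidates your injectivity step: injectivity of $\widetilde{H}$ on unstable leaves only excludes the case of \emph{equal} images). However, your proposed resolution of this crux is not a proof. First, SADC asserts the existence of foliations $\mathcal{F}^{cs},\mathcal{F}^{cu}$ that are \emph{not necessarily invariant}, transverse to $E^{u}_{f}$ resp.\ $E^{s}_{f}$ and almost parallel to the linear models; it says nothing directly about the invariant foliations $\mathcal{F}^{u}_{f},\mathcal{F}^{cu}_{f}$ staying at bounded Hausdorff distance from linear leaves. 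Upgrading the non-invariant almost-parallel foliations to control of the invariant ones (using the dynamics and the isotopy) is precisely the content of the proof of \cite[Theorem~4.1]{FPS}, so you are assuming what must be proven. Second, the claim that compactness of the center leaves makes the local unstable holonomies ``extend without obstruction and close up to a single-valued global map'' is circular: global definedness and single-valuedness of the unstable holonomy between center leaves is exactly the statement of the lemma, and compactness of leaves alone excludes neither monodromy nor incompleteness of the holonomy. (A minor additional point: the properties of $\widetilde{H}$ you use --- fibers contained in center leaves, center leaves mapped into translates of $E^{c}_{A}$ --- are themselves theorems of \cite{FPS} in this setting, not formal properties of the Franks--Manning construction, and would need to be cited or proven.) In sum, your reduction to a product structure is sound, but the hard step you yourself flag as the crux is left unresolved, so the argument has a genuine gap exactly where the paper instead relies on \cite[Theorem~4.1]{FPS}.
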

\begin{proof}
Let $z\in \mathcal{F}^{c}(x)$ and $H$ be the semiconjugacy between $f$ and $A$. Theorem \rm{B} of \cite{FPS} implies that $H(z)\in H(\mathcal{F}^{c}(x))=\mathcal{F}_{A}^{c}(H(x)).$ It follows from general results of \cite{HPS} that $A$ admits global holonomies, then there is a unique $\tilde{w}\in \mathcal{F}_{A}^{uu}(H(z))\cap \mathcal{F}^{c}_{A}(H(y))$ with $H(y)\in \mathcal{F}^{uu}_{A}(H(x))$. From injectivity of $H$ in the strong unstable leaves (see \cite[Theorem~4.1]{FPS}), we have that there exists unique $w = H^{-1}(\tilde{w})$ such that
$$w\in H^{-1}[\mathcal{F}_{A}^{uu}(H(z))\cap \mathcal{F}_{A}^{c}(H(y))]\supseteq \mathcal{F}^{u}(z)\cap \mathcal{F}^{c}(y).$$ Therefore, for every $x, y\in \mathbb{T}^{d}$ with $y\in \mathcal{F}^{u}(x)$ and every $z\in \mathcal{F}^{c}(x)$ we have that $\mathcal{F}^{u}(z)\cap \mathcal{F}^{c}(y)$ is a singleton. 
\end{proof}

The following proposition is contained in the proof of Corollary $2.1$ of \cite{TY}, where some results of Ledrappier-Young \cite{LYI}, \cite{LYII} are used. Let us consider the topological quotient $\mathbb{T}^{d}/ \mathcal{F}^{c}$ and the  projection $ \pi: \mathbb {T}^{d} \rightarrow \mathbb{T}^{d} / \mathcal{F}^{c}$ such that the transitive topological Anosov homeomorphism $f_{c}: \mathbb{T}^{d} / \mathcal{F}^{c} \rightarrow \mathbb{T}^{d} / \mathcal {F}^{c}$ satisfies $ \pi\circ f = f_{c}\circ\pi$. 

\begin{prop}\label{2.5}
Let $f$ be as in Theorem \ref{main:hyperbolicity}. If $\mu$ is an ergodic probability with all the central Lyapunov exponents non-positive almost everywhere, then $h_{\mu}(f)=h_{\pi_{\ast}\mu}(f_{c}).$
\end{prop}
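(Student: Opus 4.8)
The plan is to bound $h_{\pi_{\ast}\mu}(f_{c})$ both above and below by $h_{\mu}(f)$, using three ingredients: the identification of the unstable Pesin lamination with the strong unstable foliation $\mathcal{F}^{u}$ under the non-positivity hypothesis, the Ledrappier--Young theory expressing the metric entropy as the partial entropy along unstable Pesin manifolds, and the monotonicity of metric entropy under measurable factors.

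First I would examine the Oseledets splitting of $\mu$. Because $E^{s}_{f}$ is uniformly contracted and $E^{u}_{f}$ is uniformly expanded, every nonzero vector in $E^{s}_{f}$ has a negative Lyapunov exponent and every nonzero vector in $E^{u}_{f}$ has a positive one. Since by hypothesis all central exponents are non-positive, the sum of the Oseledets subspaces with positive exponent is exactly $E^{u}_{f}$. Hence the unstable Pesin manifold $W^{u}_{P}(x)$ is tangent to $E^{u}_{f}(x)$, and, being characterized by exponential contraction under backward iteration, it coincides (locally, and then globally by saturation) with the strong unstable leaf $\mathcal{F}^{u}(x)$ for $\mu$-almost every $x$.

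Next I would invoke the Ledrappier--Young theory \cite{LYI, LYII}, according to which, for a $C^{2}$ diffeomorphism and an ergodic measure, the metric entropy equals the partial entropy along the unstable Pesin lamination, namely $h_{\mu}(f)=H_{\mu}(f^{-1}\xi^{u}|\xi^{u})$ for a partition $\xi^{u}$ subordinate to $W^{u}_{P}$. Combined with the identification $W^{u}_{P}=\mathcal{F}^{u}$ from the previous step, this gives $h_{\mu}(f)=h_{\mu}(f,\mathcal{F}^{u})$. Theorem \ref{2.4} then yields $h_{\mu}(f,\mathcal{F}^{u})\leq h_{\pi_{\ast}\mu}(f_{c})$, whence $h_{\mu}(f)\leq h_{\pi_{\ast}\mu}(f_{c})$. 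For the reverse inequality, $\pi$ is a factor map intertwining $f$ with $f_{c}$ and sending $\mu$ to $\pi_{\ast}\mu$, so monotonicity of metric entropy under factors gives $h_{\pi_{\ast}\mu}(f_{c})\leq h_{\mu}(f)$. Chaining these inequalities forces equality throughout, which is the assertion.

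The main obstacle lies in the first two steps. The delicate point is to verify carefully that, under the non-positivity of the central exponents, the unstable Pesin lamination reduces to $\mathcal{F}^{u}$, and to apply the Ledrappier--Young machinery correctly: this requires the $C^{2}$ (equivalently $C^{1+\alpha}$) regularity, the adaptedness and increasing property of $\xi^{u}$, and the fact that the partial entropy so defined is independent of the choice of subordinate partition. Once $h_{\mu}(f)=h_{\mu}(f,\mathcal{F}^{u})$ is established, the remaining sandwich argument combining Theorem \ref{2.4} with factor monotonicity is routine.
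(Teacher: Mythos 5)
Your proposal is correct and takes essentially the same route as the paper's proof: identify the unstable Pesin lamination with $\mathcal{F}^{u}$ under the non-positive central exponent hypothesis, use Ledrappier--Young theory to obtain $h_{\mu}(f)=h_{\mu}(f,\mathcal{F}^{u})$, then combine Theorem \ref{2.4} with monotonicity of entropy under the factor map $\pi$. The only cosmetic difference is that the paper invokes the Ledrappier--Young hierarchy of entropies $h_{i}$ along the $i$-th unstable manifolds and cites Corollary 7.2.2 of \cite{LYII} explicitly, whereas you phrase the same step via the subordinate-partition formula.
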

\begin{proof}
From Lemma \ref{gh} and previous comments every $f$ under conditions of Theorem \ref{main:hyperbolicity} satisfies the hypothesis of Tazhibi-Yang's Theorem \ref{2.4}. 

The authors in \cite{LYII} define the notion of entropy $h_{i}=h_{\mu}(f, W_{P}^{i})$ along the $i$-th unstable manifold $W_{P}^{i}$ for $1\leq i \leq u$. Here $$W_{P}^{i}(x)=\{y\in \mathbb{T}^{d} : \displaystyle\limsup_{n\rightarrow \infty}\frac{1}{n}\log d(f^{-n}(x), f^{-n}(y))\leq -\lambda_{i}\}$$ and $\lambda_{1}> \lambda_{2}>\cdots > \lambda_{u}$ are the positive Lyapunov exponents of $(f,\mu)$. As $f$ is partially hyperbolic with non-positive central Lyapunov exponent, it follows that $W_{P}^{u}$ coincides with the unstable foliation $\mathcal{F}^{u}.$ By Corollary $7.2.2$ of Ledrappier-Young \cite{LYII} we have that $h_{\mu}(f)=h_{u}$. Again using \cite{LYII} we see that $h_{u}=h_{\mu}(f,\mathcal{F}^{u})$, and consequently $$h_{\mu}(f)=h_{u}=h_{\mu}(f,\mathcal{F}^{u}).$$
On the other hand, Theorem \ref{2.4} implies that $h_{\mu}(f)=h_{\mu}(f,\mathcal{F}^{u})\leq h_{\pi_{\ast}\mu}(f_{c})$, and as $f_{c}$ is factor of $f$, we have that $h_{\mu}(f)=h_{\pi_{\ast}\mu}(f_{c}).$
\end{proof}

\begin{proof}[Proof of Theorem \rm{\ref{main:hyperbolicity}}] Let $\lambda_{1}^{c}, \lambda_{2}^{c}$ be the central Lyapunov exponents of $(f,\mu$) and define $k_{0}=h_{top}(A_{c}),$ where $A_{c}:\mathbb{T}^{d}/\mathcal{F}_{A}^{c}\to \mathbb{T}^{d}/\mathcal{F}_{A}^{c}$. By Ruelle's inequality it follows that
\begin{equation}\label{eq1}
h_{top}(A_{c})<h_{\mu}(f)\leq\max\{\lambda_{1}^{c},0\}+\max\{\lambda_{2}^{c},0\}+\sum \lambda_{i,f}^{+},
\end{equation}
where $\lambda_{i,f}^{+}$ are the positive (unstable) Lyapunov exponents of $f$. By \cite[Theorem~B]{FPS} we can define a homeomorphism between central leaves and obtain that $h_{top}(f_{c})=h_{top}(A_{c}).$ Suppose  $\max\{\lambda_{1}^{c},0\}+\max\{\lambda_{2}^{c},0\}=0,$ then the central Lyapunov exponents are non-positive. From Proposition \ref{2.5} follows that 
\begin{equation*}
h_{\mu}(f)=h_{\mu}(f,\mathcal{F}^{u})=h_{\pi_{\ast}\mu}(f_{c})\leq h_{top}(f_{c})=h_{top}(A_{c}),    
\end{equation*}
this is a contradiction with (\ref{eq1}). Then, $\max\{\lambda_{1}^{c},0\}+\max\{\lambda_{2}^{c},0\}>0.$ Analogously, as  $h_{\mu}(f^{-1})=h_{\mu}(f)$ we have that $\max\{-\lambda_{1}^{c},0\}+\max\{-\lambda_{2}^{c},0\}>0$. Therefore, $\mu$ is a hyperbolic measure with both positive and negative central Lyapunov exponents. 
\end{proof}

\section*{Acknowledgments}
The results of this work are part of author's PhD thesis defended at the IMECC-UNICAMP. The author thanks Rafael Potrie, Régis Var\~{a}o and referees for the useful comments for improve this work. The author also thanks Universidad del Sinú Seccional Cartagena for the financial support for end this project.

\end{document}